\title{Koll\'ar--Nadel type vanishing theorem}
\author{Osamu Fujino} 
\date{2016/7/10, version 0.01}
\subjclass[2010]{Primary 32L10; Secondary 32Q15}
\keywords{injectivity theorem, 
Nadel vanishing theorem, Koll\'ar vanishing theorem, multiplier ideal 
sheaves}
\address{Department of Mathematics, Graduate School of Science, 
Osaka University, Toyonaka, Osaka 560-0043, Japan}
\email{fujino@math.sci.osaka-u.ac.jp}
\DeclareMathOperator{\Coker}{Coker}
\newtheorem{thm}{Theorem}[section]
\theoremstyle{definition}
\newtheorem*{rem}{\it{Remark}}
\newtheorem*{ack}{Acknowledgments} 
\begin{document}

\maketitle 

\begin{abstract}
We prove an analytic generalization of Koll\'ar's 
vanishing theorem, which contains the Nadel vanishing 
theorem as a special case. 
\end{abstract}

\section{Introduction}

In the conference, I talked about 
the Hodge theoretic aspect of injectivity and 
vanishing theorems (see \cite{fujino1}, \cite{fujino2}, 
and \cite{fujino3}). 
Here, I will explain some analytic generalizations. 
In \cite{fujino-matsumura}, Shin-ichi Matsumura and I 
established the 
following theorems. 

\begin{thm}[{\cite[Theorem A]{fujino-matsumura}}]\label{f-thm1}
Let $F$ be a holomorphic line bundle 
on a compact K\"ahler manifold $X$ and let $h$ be a singular 
hermitian metric on $F$. 
Let $M$ be a holomorphic line bundle on $X$ 
equipped with a smooth hermitian metric $h_M$. 
We assume that 
$$
\sqrt{-1}\Theta_{h_M}(M)\geq 0 \quad 
\text{and}\quad \sqrt{-1}\Theta_h(F)-a\sqrt{-1}\Theta_{h_M}(M)\geq 0
$$ for some $a>0$. 
Let $s$ be a nonzero global section of $M$. 
Then the map 
$$
\times s: H^i(X, \omega_X\otimes F\otimes 
\mathcal J(h))\to H^i(X, \omega_X\otimes F\otimes \mathcal J(h)\otimes M) 
$$ 
induced by $\otimes s$ is injective for every $i$, where 
$\omega_X$ is the canonical bundle of $X$ and 
$\mathcal J(h)$ is the multiplier ideal sheaf of $h$. 
\end{thm}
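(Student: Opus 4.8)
The plan is to prove the theorem by $L^2$ harmonic-theoretic methods, in the spirit of the injectivity theorems of Enoki and Takegoshi, with the singularities of $h$ absorbed by an approximation argument. First I would realize both cohomology groups as spaces of $L^2$-harmonic forms. After rescaling $h_M$ by a positive constant we may assume $|s|_{h_M}\le 1$ on $X$; since $h_M$ is smooth, $\mathcal J(h)\otimes M=\mathcal J(h\otimes h_M)$, so the target group is computed by $L^2$-forms with values in $F\otimes M$ weighted by $h\otimes h_M$. Invoking the $L^2$-representation of multiplier-ideal cohomology (the de Rham--Weil/Dolbeault isomorphism, after Demailly) on $X\setminus Z$ for a suitable analytic set $Z$ equipped with a complete K\"ahler metric, it suffices to prove: if $u$ is a $\bar\partial$-closed $L^2$ $(n,i)$-form with values in $F$ whose class is annihilated by $\times s$ --- so $s\cdot u=\bar\partial v$ for some $L^2$ form $v$ --- and if $u$ may be chosen harmonic ($\bar\partial u=\bar\partial^*_h u=0$), then $u\equiv 0$.

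Granting such a harmonic representative, the heart of the matter is the Enoki-type computation. Since $u$ has bidegree $(n,i)$ and $\sqrt{-1}\Theta_h(F)\ge 0$, the Bochner--Kodaira--Nakano identity on the complete K\"ahler manifold gives
$$
0=\|(D'_h)^{*}u\|^{2}+\int\langle[\sqrt{-1}\Theta_h(F),\Lambda]u,u\rangle,
$$
where $D'_h$ is the $(1,0)$-part of the Chern connection of $(F,h)$; for $(n,i)$-forms the integrand is pointwise nonnegative, so $(D'_h)^{*}u=0$ and $\langle[\sqrt{-1}\Theta_h(F),\Lambda]u,u\rangle=0$ almost everywhere. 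Writing $[\sqrt{-1}\Theta_h(F),\Lambda]=a[\sqrt{-1}\Theta_{h_M}(M),\Lambda]+[\sqrt{-1}\Theta_h(F)-a\sqrt{-1}\Theta_{h_M}(M),\Lambda]$ as a sum of two operators that are again nonnegative on $(n,i)$-forms, and using $a>0$, we conclude $\langle[\sqrt{-1}\Theta_{h_M}(M),\Lambda]u,u\rangle=0$ a.e.\ as well. Now apply Bochner--Kodaira--Nakano to $s\cdot u$, which is $\bar\partial$-closed because $s$ is holomorphic, with the metric $h\otimes h_M$: since $\bar\partial(s\cdot u)=0$,
$$
\|\bar\partial^{*}_{h\otimes h_M}(s\cdot u)\|^{2}=\|(D'_{h\otimes h_M})^{*}(s\cdot u)\|^{2}+\int|s|_{h_M}^{2}\,\langle[\sqrt{-1}\Theta_h(F)+\sqrt{-1}\Theta_{h_M}(M),\Lambda]u,u\rangle,
$$
and the last integral vanishes by the two vanishings just obtained, while the K\"ahler identities together with $(D'_h)^{*}u=0$ give $(D'_{h\otimes h_M})^{*}(s\cdot u)=0$. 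Hence $s\cdot u$ is harmonic. Being also $\bar\partial$-exact, $s\cdot u$ is orthogonal to the harmonic space, so $s\cdot u\equiv 0$; therefore $u$ vanishes on the nonempty open set $\{s\ne 0\}$, and since $u$ solves a second-order elliptic system, unique continuation forces $u\equiv 0$, proving injectivity.

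It remains to construct the harmonic representative and to justify the formal manipulations above in the presence of the singular metric $h$. I would approximate $h$ from above by an equisingular family $h_\varepsilon\downarrow h$ with analytic singularities and $\sqrt{-1}\Theta_{h_\varepsilon}(F)\ge\sqrt{-1}\Theta_h(F)-\varepsilon\omega$ (Demailly regularization), invoking the strong openness property of multiplier ideals so that $\mathcal J(h_\varepsilon)=\mathcal J(h)$ for $\varepsilon\ll 1$ and the cohomology groups are unaffected; then run $L^2$-Hodge theory on $X\setminus Z_\varepsilon$, with $Z_\varepsilon$ the analytic singular locus of $h_\varepsilon$ and a complete K\"ahler metric, to obtain representatives $u_\varepsilon$ that are harmonic up to an error controlled by $\varepsilon$ --- the curvature hypotheses survive up to the same $O(\varepsilon)$ loss, absorbed in the limit by perturbing the K\"ahler metric --- and solve $s\cdot u_\varepsilon=\bar\partial v_\varepsilon$ with $L^2$ estimates on $v_\varepsilon$. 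A diagonal/compactness argument then extracts limits $u_\varepsilon\to u$, $v_\varepsilon\to v$ with $u$ genuinely harmonic for $h$ and $s\cdot u=\bar\partial v$, so that the previous paragraph applies.

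I expect the main obstacle to be exactly this last step: bridging the coherent sheaf $\omega_X\otimes F\otimes\mathcal J(h)$ and the analytic harmonic theory, that is, setting up the $L^2$-cohomology on the non-compact $X\setminus Z$ so that the de Rham--Weil isomorphism is compatible with $\times s$, and obtaining estimates uniform enough in the regularization parameters to extract a harmonic limit together with a solution of the $\bar\partial$-equation of bounded norm. By comparison, the Bochner--Kodaira computation showing that $s\cdot u$ is harmonic is, once the analytic framework is in place, essentially formal.
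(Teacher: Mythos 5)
You should first note that this paper contains no proof of Theorem \ref{f-thm1}: it is quoted verbatim from \cite[Theorem A]{fujino-matsumura} and used as a black box in the proof of Theorem \ref{f-thm3}. So the only meaningful comparison is with the proof in that reference, and your outline does follow its strategy in broad strokes: represent classes by harmonic forms, run an Enoki--Takegoshi type Bochner--Kodaira argument to show that $(D'_h)^*u=0$ and that the curvature terms vanish (splitting $\sqrt{-1}\Theta_h(F)$ using the hypothesis $\sqrt{-1}\Theta_h(F)-a\sqrt{-1}\Theta_{h_M}(M)\geq 0$), conclude that $s\cdot u$ is harmonic, and kill it because it is also $\bar\partial$-exact. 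That formal core is essentially right.

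The genuine gap is exactly where you place ``the main obstacle,'' and it cannot be deferred, because it is the actual content of the theorem. First, the regularization you invoke does not exist: in general one cannot find $h_\varepsilon\downarrow h$ having analytic singularities \emph{and} satisfying $\mathcal J(h_\varepsilon)=\mathcal J(h)$. Demailly-type regularization gives either approximations with analytic singularities but only approximate control of the multiplier ideals, or equisingular approximations that are merely smooth outside closed analytic subsets which grow with $\varepsilon$; the strong openness theorem does not repair this, and the growing singular loci already obstruct the naive ``complete K\"ahler metric on $X\setminus Z$'' setup. Second, since $\sqrt{-1}\Theta_h(F)$ is only a current, the pointwise Bochner identities you write are meaningless before regularization, and after regularization the curvature hypotheses survive only up to a $-\varepsilon\omega$ error; hence at no finite stage do the curvature integrands actually vanish, and the whole Enoki computation must be carried out with error terms and shown to close up in the limit $\varepsilon\to 0$. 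This requires, among other things, a de Rham--Weil (\v{C}ech-to-$L^2$-Dolbeault) comparison compatible with $\times s$ for the singular weights $h_\varepsilon$, uniform $L^2$ bounds on solutions $v_\varepsilon$ of $s\cdot u_\varepsilon=\bar\partial v_\varepsilon$ so that the limit $s\cdot u$ stays orthogonal to the harmonic space, and a weak-compactness argument showing the limit of the $h_\varepsilon$-harmonic representatives still represents the original class. That machinery, developed in \cite{fujino-matsumura} building on Matsumura's earlier work, is not a technicality to be ``absorbed by perturbing the K\"ahler metric''; as written, your proposal reduces the theorem to precisely the part that is hard. (A small further remark: unique continuation is not needed at the end --- once $s\cdot u\equiv 0$, the form $u$ vanishes on the dense open set $\{s\neq 0\}$ and hence identically, wherever it is defined.)
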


Theorem \ref{f-thm1} is a generalization of Enoki's injectivity theorem 
(see \cite[Theorem 0.2]{enoki}). 
Although the formulation of Theorem 
\ref{f-thm1} may look artificial, it 
has many interesting applications (see \cite{fujino-matsumura}). 
Theorem \ref{f-thm2} below is a Bertini-type theorem for multiplier ideal 
sheaves. 

\begin{thm}[{\cite[Theorem 1.10]{fujino-matsumura}}]\label{f-thm2}
Let $X$ be a compact complex manifold, let $\Lambda$ be a free 
linear system on $X$ with $\dim \Lambda\geq 1$, and 
let $\varphi$ be a quasi-plurisubharmonic 
function on $X$. 
We put 
$$
\mathcal G=\{ H\in \Lambda \, |\, {\text{$H$ is smooth 
and $\mathcal J(\varphi|_H)=\mathcal J(\varphi)|_H$}}\}.  
$$ Then $\mathcal G$ is dense in $\Lambda$ in the classical topology. 
Note that $\mathcal J (\varphi)$ is the multiplier ideal 
sheaf of $\varphi$. 
\end{thm}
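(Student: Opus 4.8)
The plan is to combine the classical Bertini theorem, the Ohsawa--Takegoshi $L^2$ extension theorem, and a Fubini-type slicing argument, all organised on the incidence variety of $\Lambda$; I will in fact show that the complement of $\mathcal G$ in $\Lambda\cong\mathbb P^N$ ($N=\dim\Lambda$) is contained in a Lebesgue-null set, which gives density in the classical topology. The starting point is that one inclusion is automatic: for every smooth member $H\in\Lambda$ with $\varphi|_H\not\equiv-\infty$ one has $\mathcal J(\varphi|_H)\subseteq\mathcal J(\varphi)|_H$. This is a local statement, and after a biholomorphic change of coordinates making $H$ a coordinate hyperplane it is precisely the local Ohsawa--Takegoshi $L^2$ extension theorem (a holomorphic function on $H$ that is $L^2$ against $e^{-\varphi|_H}$ extends to a holomorphic function that is $L^2$ against $e^{-\varphi}$). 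So it suffices to prove that, outside a null subset of $\Lambda$, the member $H$ is smooth, $\varphi|_H\not\equiv-\infty$ on each connected component, and the reverse inclusion $\mathcal J(\varphi)|_H\subseteq\mathcal J(\varphi|_H)$ holds.

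For the geometric set-up, recall that freeness of $\Lambda$ makes the incidence variety $I=\{(x,H)\in X\times\Lambda: x\in H\}$ a $\mathbb P^{N-1}$-bundle over $X$ via the first projection $p\colon I\to X$, so $I$ is a compact complex manifold; the second projection $q\colon I\to\Lambda$ is a proper surjection with $q^{-1}(H)\cong H$. By Sard's theorem (equivalently the classical Bertini theorem, equivalently parametric transversality for the holomorphic map $X\to\mathbb P^N$ attached to $\Lambda$) the set $\Lambda\setminus\Lambda_{\mathrm{sm}}$ of critical values of $q$ is null, and over $\Lambda_{\mathrm{sm}}$ the map $q$ is a submersion with smooth fibres $H_s:=q^{-1}(s)$. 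Put $\Phi:=p^{*}\varphi$, a quasi-plurisubharmonic function on $I$. A power-series expansion along the fibres of the submersion $p$ shows $\mathcal J(\Phi)=p^{*}\mathcal J(\varphi)$; hence, under the isomorphism $I_s\cong H_s$ induced by $p$, the function $\Phi|_{I_s}$ corresponds to $\varphi|_{H_s}$ and the ideal $\mathcal J(\Phi)|_{I_s}$ to $\mathcal J(\varphi)|_{H_s}$. The problem is thereby reduced to showing that $\mathcal J(\Phi)$ restricts correctly to the fibre $I_s$ for $s$ outside a null set.

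For that generic restriction statement, cover the open set $I^{\circ}\subseteq I$ where $q$ is submersive by countably many charts $V_i\Subset V_i'$ on which $q|_{V_i'}$ looks like the projection $\Delta^{\dim X-1}\times W_i'\to W_i'$ onto a polydisc chart $W_i'$ of $\Lambda$; by Nadel's coherence theorem the sheaf $\mathcal J(\Phi)$ has finitely many holomorphic generators on each $V_i'$. Each such generator $\tilde f$ satisfies $\int_{V_i'}|\tilde f|^2 e^{-\Phi}<\infty$, so Fubini in the fibre variables yields, for every $s\in W_i'$ outside a null set $N_i$, that $\int_{\Delta^{\dim X-1}}|\tilde f(\cdot,s)|^2 e^{-\Phi(\cdot,s)}<\infty$, i.e.\ $\tilde f|_{I_s}\in\mathcal J(\Phi|_{I_s})$ at every point of $V_i\cap I_s$. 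Enlarging $\bigcup_i N_i$ by the (also null, by Fubini) set of $s$ for which $\Phi|_{I_s}\equiv-\infty$ on a component, one obtains a null set $N$ such that for $s\in\Lambda_{\mathrm{sm}}\setminus N$ the charts $V_i$ meeting $I_s$ cover $I_s$ and all chosen generators restrict into $\mathcal J(\Phi|_{I_s})$; hence $\mathcal J(\Phi)|_{I_s}\subseteq\mathcal J(\Phi|_{I_s})$, and together with the Ohsawa--Takegoshi inclusion $\mathcal J(\Phi)|_{I_s}=\mathcal J(\Phi|_{I_s})$, i.e.\ $\mathcal J(\varphi)|_{H_s}=\mathcal J(\varphi|_{H_s})$. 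Since $H_s$ is also smooth, $H_s\in\mathcal G$ for every $s$ outside the null set $N\cup(\Lambda\setminus\Lambda_{\mathrm{sm}})$, and therefore $\mathcal G$ is dense.

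The step I expect to be the main obstacle is precisely the globalisation in the last paragraph: the Fubini slicing only controls finitely many germs near a single point for almost every slice, and turning this into a statement valid simultaneously at every point of a fibre and for every germ of $\mathcal J(\varphi)$ is what forces the passage to the incidence variety together with the combined use of Nadel's coherence theorem (to reduce to finitely many generators on each chart) and second countability (so that a countable union of the exceptional null sets $N_i$ remains null). The Ohsawa--Takegoshi extension theorem is the only substantial external analytic input, and the smoothness of a general member is classical.
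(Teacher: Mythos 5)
The paper itself contains no proof of this statement---it is quoted verbatim from \cite[Theorem 1.10]{fujino-matsumura}---and your argument (the Ohsawa--Takegoshi extension theorem giving $\mathcal J(\varphi|_H)\subseteq\mathcal J(\varphi)|_H$ for every smooth member, and Nadel coherence combined with a Sard/Fubini slicing over the incidence variety to get the reverse inclusion off a Lebesgue-null subset of $\Lambda$) is exactly the standard route taken in that reference, and it is correct. Only cosmetic repairs are needed: integrate the finitely many local generators over relatively compact shrinkings $V_i\Subset V_i''\Subset V_i'$ (sections of $\mathcal J(\Phi)$ are only locally $L^2$ against $e^{-\Phi}$), require $\varphi|_H\not\equiv-\infty$ on every connected component, and note that the identification $\mathcal J(p^{*}\varphi)=p^{*}\mathcal J(\varphi)$ uses the closedness of coherent ideal sheaves under locally uniform limits in the power-series argument (or can be bypassed altogether by applying the Fubini slicing directly to pullbacks of local generators of $\mathcal J(\varphi)$).
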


The main purpose of this 
paper is to prove the following theorem, which is a 
slight generalization of \cite[Theorem D]{fujino-matsumura}, 
as an application of Theorem \ref{f-thm1} and Theorem \ref{f-thm2}. 

\begin{thm}[Vanishing theorem of Koll\'ar--Nadel type]\label{f-thm3} 
Let $f:X\to Y$ be a holomorphic map from 
a compact K\"ahler manifold $X$ to 
a projective variety $Y$. 
Let $F$ be a holomorphic 
line bundle on $X$ equipped with 
a singular hermitian metric $h$. 
Let $H$ be an ample line bundle on $Y$. 
Assume that there exists a smooth hermitian metric $g$ on 
$f^*H$ such that 
$$
\sqrt{-1}\Theta_g(f^*H)\geq 0 \quad \text{and}\quad 
\sqrt{-1}\Theta_h(F)-\varepsilon 
\sqrt{-1}\Theta_g(f^*H)\geq 0
$$ 
for some $\varepsilon >0$. 
Then we have 
$$
H^i(Y, R^jf_*(\omega_X\otimes F\otimes \mathcal J(h)))=0
$$ 
for every $i>0$ and $j$, 
where $\omega_X$ is the canonical bundle of $X$ 
and $\mathcal J(h)$ is the multiplier ideal 
sheaf associated to the singular hermitian metric $h$. 
\end{thm}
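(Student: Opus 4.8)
\emph{Sketch of proof.} The plan is to lift the statement to $X$, apply the injectivity Theorem~\ref{f-thm1} there, and descend again to $Y$ using a Koll\'ar-type decomposition of $Rf_*L$, where $L:=\omega_X\otimes F\otimes\mathcal J(h)$.

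First I would record that, $X$ being compact, $f$ is proper and $L$ coherent, so each $\mathcal G^{\,j}:=R^{j}f_*L$ is a coherent $\mathcal O_Y$-module vanishing for $j\gg0$. Since $Y$ is projective, Serre vanishing lets me fix $m$ so large that $H^{\otimes m}$ is very ample, $\dim|H^{\otimes m}|\ge1$, and $H^{i}(Y,\mathcal G^{\,j}\otimes H^{\otimes m})=0$ for all $i>0$ and all $j$; by the projection formula $R^{j}f_*(L\otimes f^*H^{\otimes m})\cong\mathcal G^{\,j}\otimes H^{\otimes m}$. Next I would choose a nonzero $s\in H^{0}(Y,H^{\otimes m})$ whose zero divisor does not contain $f(X)$ (possible since $|H^{\otimes m}|$ is base-point free and positive-dimensional), so that $f^*s\neq0$, and apply Theorem~\ref{f-thm1} on $X$ with $M:=f^*H^{\otimes m}$, smooth metric $h_M:=g^{\otimes m}$, and $a:=\varepsilon/m>0$: the required inequalities $\sqrt{-1}\Theta_{h_M}(M)=m\sqrt{-1}\Theta_g(f^*H)\ge0$ and $\sqrt{-1}\Theta_h(F)-a\sqrt{-1}\Theta_{h_M}(M)=\sqrt{-1}\Theta_h(F)-\varepsilon\sqrt{-1}\Theta_g(f^*H)\ge0$ are precisely the hypotheses of the theorem, so
$$
\otimes f^*s\colon H^{k}(X,L)\longrightarrow H^{k}\!\bigl(X,L\otimes f^*H^{\otimes m}\bigr)
$$
is injective for every $k$.

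The hard part is the Koll\'ar-type decomposition
$$
Rf_*L\ \cong\ \bigoplus_{j\ge0}\mathcal G^{\,j}[-j]\qquad\text{in }D^{b}_{\mathrm{coh}}(Y),
$$
i.e.\ the $E_2$-degeneration of the Leray spectral sequence of $f$ for $L$, in a form compatible with multiplication by sections. For $F=\mathcal O_X$ and $X$ projective this is Koll\'ar's theorem; I would prove it here by induction on $\dim Y$, cutting $X$ by the preimage $X'=f^{-1}(D)$ of a general member $D$ of $|H^{\otimes m}|$ — so $X'$ is a general member of the base-point free linear system $f^*|H^{\otimes m}|$ on $X$, hence, by Bertini, a smooth compact K\"ahler hypersurface, and, by Theorem~\ref{f-thm2} applied to $\Lambda=f^*|H^{\otimes m}|$ and a quasi-plurisubharmonic weight of $h$, one has $\mathcal J(h|_{X'})=\mathcal J(h)|_{X'}$. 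With the adjunction $\omega_{X'}\cong(\omega_X\otimes f^*H^{\otimes m})|_{X'}$ this shows that $(\omega_{X'},F|_{X'},h|_{X'})$ again satisfies the hypotheses of the theorem for $f|_{X'}\colon X'\to D$, with $\dim D=\dim Y-1$, so the inductive hypothesis applies; Theorem~\ref{f-thm1}, on $X$ and on the successive hyperplane sections, then forces the vanishing of the connecting morphisms obstructing the splitting. I expect establishing this decomposition to be the main obstacle.

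Finally, granting it, I would conclude as follows. Twisting the decomposition by $H^{\otimes m}$ and using the projection formula (compatibly with multiplication by $s$), the map $\otimes f^*s$ becomes a morphism $\bigoplus_{j}\mathcal G^{\,j}[-j]\to\bigoplus_{j}(\mathcal G^{\,j}\otimes H^{\otimes m})[-j]$ whose component from the $j$-th to the $j'$-th summand lies in $\operatorname{Ext}^{\,j-j'}_{\mathcal O_Y}(\mathcal G^{\,j},\mathcal G^{\,j'}\otimes H^{\otimes m})$, which vanishes when $j<j'$. Passing to hypercohomology (the summand $\mathcal G^{\,j}[-j]$ contributing $H^{\,k-j}(Y,\mathcal G^{\,j})$ in degree $k$) and writing $i:=k-j$ gives
$$
H^{k}(X,L)=\bigoplus_{i+j=k}H^{i}(Y,\mathcal G^{\,j}),
$$
and $\otimes f^*s$ sends the summand $H^{i}(Y,\mathcal G^{\,j})$ into $\bigoplus_{i'\ge i}H^{i'}(Y,\mathcal G^{\,k-i'}\otimes H^{\otimes m})$. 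For $i\ge1$ this last sum vanishes by the Serre vanishing chosen above, so the injective map $\otimes f^*s$ annihilates $\bigoplus_{i\ge1}H^{i}(Y,\mathcal G^{\,k-i})\subseteq H^{k}(X,L)$, forcing this subspace to be zero. Letting $k$ vary, $H^{i}(Y,R^{j}f_*(\omega_X\otimes F\otimes\mathcal J(h)))=0$ for all $i>0$ and all $j$, which is the assertion.
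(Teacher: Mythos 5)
Your opening and closing moves are sound and match the intended use of the hypotheses: applying Theorem~\ref{f-thm1} with $M=f^*H^{\otimes m}$, $h_M=g^{\otimes m}$, $a=\varepsilon/m$ is exactly the right way to get injectivity of $\otimes f^*s$ on $H^k(X,L)$, and the final formal argument (upper-triangularity of the components via $\operatorname{Ext}^{j-j'}$-vanishing, then Serre vanishing killing the image of the $i\geq 1$ summands) is correct \emph{granted} the decomposition $Rf_*L\cong\bigoplus_j R^jf_*L[-j]$. The genuine gap is precisely that decomposition, which you yourself flag as ``the main obstacle'': it is a Koll\'ar-type torsion-freeness/decomposition theorem for $\omega_X\otimes F\otimes\mathcal J(h)$, a statement strictly stronger than the vanishing theorem you are asked to prove, and it does not follow from Theorems~\ref{f-thm1} and \ref{f-thm2} by the sketch you give. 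Saying that Theorem~\ref{f-thm1} ``forces the vanishing of the connecting morphisms obstructing the splitting'' is not an argument: Theorem~\ref{f-thm1} gives injectivity of multiplication maps on cohomology groups of $X$, while splitting $Rf_*L$ in $D^b_{\mathrm{coh}}(Y)$ requires killing differentials/extension classes over $Y$, and no mechanism relating the two is supplied. In Koll\'ar's original algebraic setting the degeneration is proved by Hodge theory (and in the analytic multiplier-ideal setting by delicate harmonic/$L^2$ arguments), not by formal consequences of injectivity; so as written the proposal assumes the hardest input.

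The paper shows how to avoid the decomposition entirely. One inducts on $\dim Y$: for a general $B\in|H^{\otimes m}|$ with $D=f^{-1}(B)$, Theorem~\ref{f-thm2} plus the fact that $D$ misses the associated primes of $\mathcal O_X/\mathcal J(h)$ yields the exact sequence $0\to\mathcal J(h)\otimes\mathcal O_X(-D)\to\mathcal J(h)\to\mathcal J(h|_D)\to 0$, hence, after twisting and adjunction, an exact sequence of higher direct images relating $\mathcal F^j$, $\mathcal F^j\otimes H^{\otimes m}$ and the corresponding sheaf on $B$. Induction on $\dim Y$ plus Serre vanishing then gives $H^i(Y,\mathcal F^j)=0$ for all $i\geq 2$. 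For the remaining case $i=1$ one only needs the Leray \emph{edge} maps, not degeneration: since $H^i(Y,\mathcal F^j)=0$ for $i\geq 2$ and $H^i(Y,\mathcal F^j\otimes H^{\otimes m})=0$ for $i>0$, the natural maps $H^1(Y,\mathcal F^j)\to H^{j+1}(X,L)$ and $H^1(Y,\mathcal F^j\otimes H^{\otimes m})\to H^{j+1}(X,L\otimes f^*H^{\otimes m})$ are injective, and Theorem~\ref{f-thm1} applied to the right-hand vertical multiplication map forces $H^1(Y,\mathcal F^j)\hookrightarrow H^1(Y,\mathcal F^j\otimes H^{\otimes m})=0$. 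If you want to salvage your write-up, replace the unproven splitting by this two-step argument (or supply an actual proof of the decomposition, which is a substantially harder undertaking).
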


We can easily see that Theorem \ref{f-thm3} 
contains Demailly's original formulation of the Nadel vanishing 
theorem (see \cite[Theorem 1.4]{fujino-matsumura}) 
and Koll\'ar's vanishing theorem (see \cite[Theorem 2.1 (iii)]{kollar}) 
as special cases.  
Therefore, we call Theorem \ref{f-thm3} the vanishing theorem 
of Koll\'ar--Nadel type. 
For a related vanishing theorem, 
see \cite[Theorem 1.3]{matsumura}. 

In this paper, we will freely use the same notation as in \cite{fujino-matsumura}. 

\section{Proof of Theorem \ref{f-thm3}}

In this section, we prove Theorem \ref{f-thm3} as an application of 
Theorem \ref{f-thm1} and Theorem \ref{f-thm2}. 
I hope that the following proof will show the reader how to use 
Theorem \ref{f-thm1} and Theorem \ref{f-thm2}. 

\begin{proof}[Proof of Theorem \ref{f-thm3}] 
We use the induction on $\dim Y$. 
If $\dim Y=0$, then the statement is obvious. 
We take a sufficiently large positive integer $m$ and a general 
member $B\in |H^{\otimes m}|$ such that 
$D=f^{-1}(B)$ is smooth, 
contains no associated primes of $\mathcal O_X/\mathcal J(h)$, 
and satisfies $\mathcal J(h|_D)=\mathcal 
J(h)|_D$ by Theorem \ref{f-thm2}. 
By the Serre vanishing theorem, we may further assume that 
\begin{equation}\label{f-eq-1}
H^i(Y, R^jf_*(\omega_X\otimes F\otimes \mathcal J(h))
\otimes H^{\otimes m})=0
\end{equation} 
for every $i>0$ and $j$. 
We have the following big commutative diagram. 
$$
\xymatrix{
 & 0\ar[d] & 0\ar[d]&\\
0 \ar[r]& \mathcal J(h) \otimes \mathcal O_X(-D)
\ar[d]
\ar[r]^{\quad \quad  \alpha}
&\mathcal J(h)\ar[d]\ar[r]& \Coker \alpha 
\ar[r]\ar[d]^{\beta}& 0 \\ 
0 \ar[r]& \mathcal O_X(-D)\ar[d]
\ar[r]
&\mathcal O_X \ar[d]\ar[r]& \mathcal O_{D}\ar[r]& 0 \\
&\left(\mathcal O_X/\mathcal J(h)\right)\otimes 
\mathcal O_X(-D) \ar[r]^{\quad \quad \quad \gamma}
\ar[d]& \mathcal O_X/\mathcal J(h)\ar[d]&\\
& 0& 0&
}
$$ 
Since $D$ contains no associated primes of $\mathcal O_X/\mathcal J(h)$, 
$\gamma$ is injective. 
This implies that $\beta$ is injective by the 
snake lemma and that $\Coker \alpha=\mathcal J(h)|_D
=\mathcal J(h|_D)$. 
Thus we obtain the following short exact sequence: 
$$
0\to \mathcal J(h)\otimes \mathcal O_X(-D)\to 
\mathcal J(h)\to \mathcal J(h|_D)\to 0. 
$$ 
By taking $\otimes \omega_X\otimes F\otimes \mathcal O_X(D)$ 
and using adjunction, 
we obtain the short exact sequence:   
$$
0\to \omega_X\otimes F\otimes \mathcal J(h)
\to \omega_X\otimes F\otimes \mathcal J(h) \otimes f^*H^{\otimes m}
\to \omega_D\otimes F|_D\otimes \mathcal J(h|_D)\to 0.  
$$ 
Therefore, we see that 
\begin{equation}\label{f-eq-2}
\begin{split}
0 &\to R^jf_*(\omega_X\otimes F\otimes \mathcal J(h)) 
\to R^jf_*(\omega_X\otimes F\otimes \mathcal J(h))
\otimes H^{\otimes m}\\& 
\to R^jf_*(\omega_D\otimes F|_D\otimes \mathcal J(h|_D))\to 0
\end{split} 
\end{equation} 
is exact 
for every $j$ since $B$ is a general member of $|H^{\otimes m}|$. 
By induction on $\dim Y$, we have 
\begin{equation}\label{f-eq-3}
H^i(B, R^jf_*(\omega_D\otimes F|_D\otimes 
\mathcal J(h|_D)))=0
\end{equation} for 
every $i>0$ and $j$. 
By taking the long exact sequence associated to \eqref{f-eq-2}, 
we obtain 
\begin{equation*}
\begin{split}
H^i(Y, R^jf_*(\omega_X\otimes F\otimes \mathcal J(h)))
=H^i(Y, R^jf_*(\omega_X\otimes F\otimes \mathcal J(h))
\otimes H^{\otimes m})
\end{split}
\end{equation*} 
for every $i\geq 2$ and $j$ by \eqref{f-eq-3}. 
Thus we have 
\begin{equation}\label{f-eq-4}
H^i(Y, R^jf_*(\omega_X\otimes F\otimes \mathcal J(h)))=0
\end{equation}
for every $i\geq 2$ and $j$ by \eqref{f-eq-1}. 
By Leray's spectral sequence and \eqref{f-eq-1} and \eqref{f-eq-4}, 
we have the 
following commutative diagram: 
$$
\xymatrix{
H^1(Y, \mathcal F^j)\ar[d]_\alpha
\ar@{^{(}->}[r]& H^{j+1}(X, 
\omega_X\otimes F\otimes \mathcal J(h))
\ar@{^{(}->}[d]^{\beta}
\\ 
H^1(Y, \mathcal F^j \otimes H^{\otimes m}) \ar@{^{(}->}[r]& H^{j+1}(X, 
\omega_X\otimes F\otimes \mathcal J(h)\otimes 
f^*H^{\otimes m})
}
$$
for every $j$, 
where $\mathcal F^j=R^jf_*(\omega_X\otimes F\otimes \mathcal 
J(h))$. Note that the horizontal arrows are 
injective. 
Since $\beta$ is injective by Theorem \ref{f-thm1}, 
we obtain that $\alpha$ is also injective. 
By \eqref{f-eq-1}, we have 
\begin{equation*}
H^1(Y, R^jf_*(\omega_X\otimes F\otimes \mathcal J(h))
\otimes H^{\otimes m})=0
\end{equation*} 
for every $j$. 
Therefore, we see that $H^1(Y, 
R^jf_*(\omega_X
\otimes F\otimes \mathcal J(h)))=0$ for every $j$. 
Thus we obtain the desired vanishing theorem:~Theorem \ref{f-thm3}. 
\end{proof}

We close this section with a remark on Nakano semipositive 
vector bundles. 

\begin{rem}\label{f-rem2.1}
Let $E$ be a Nakano semipositive 
vector bundle on $X$. 
We can easily see that Theorem \ref{f-thm3} holds 
even when $\omega_X$ is replaced by $\omega_X\otimes E$. 
We leave the details as an exercise for the reader 
(see \cite[Section 6]{fujino-matsumura}). 
\end{rem}

\begin{ack}
I was partially supported by Grant-in-Aid for Young Scientists (A) 24684002, 
Grant-in-Aid for Scientific Research (S) 16H06337, 
and Grant-in-Aid for Scientific Research (B) 16H03925 from JSPS. 
I thank Shin-ichi Matsumura very much whose 
comments made Theorem \ref{f-thm3} 
better than my original formulation. 
\end{ack}


\end{document}